\newcommand{\keywords}[1]{\par\addvspace\baselineskip
\noindent\keywordname\enspace\ignorespaces#1}
\renewcommand\bibsection%
\newcommand{\prob}{\mathsf{P}} 
\newcommand{\E}{\mathsf{E}}
\newcommand{\unif}{{\sf Unif}}
\newcommand{\nm}{{\sf N}}
\newcommand{\RR}{\mathbb{R}}
\newcommand{\XX}{\mathbb{X}}
\newcommand{\TT}{\mathbb{T}}
\newcommand{\lPi}{\underline{\Pi}}
\begin{document}

\mainmatter 

\title{Which statistical hypotheses are afflicted with false confidence?}

\titlerunning{Which hypotheses are afflicted with false confidence?}

%

\author{Ryan Martin\thanks{Partially supported by the U.S.~National Science Foundation, SES--205122.}%
}
\authorrunning{R.~Martin}

\institute{
North Carolina State University \\ Department of Statistics \\
Raleigh, North Carolina 27695, U.S.A. \\ \email{rgmarti3@ncsu.edu}
}

%
%

\toctitle{Lecture Notes in Computer Science}
\tocauthor{Authors' Instructions}

\maketitle

\begin{abstract}
The false confidence theorem establishes that, for any data-driven, precise-probabilistic method for uncertainty quantification, there exists (non-trivial) false hypotheses to which the method tends to assign high confidence.  This raises concerns about the reliability of these widely-used methods, and shines new light on the consonant belief function-based methods that are provably immune to false confidence.  But an existence result alone is insufficient.  Towards a partial answer to the title question, I show that, roughly, complements of convex hypotheses are afflicted by false confidence. 

\keywords{Bayesian; consonant beliefs; convexity; inferential model; fiducial inference; possibility theory; validity.}
\end{abstract}

\section{Introduction}
\label{S:intro}

In {\em Logic of Statistical Inference}, Hacking \cite{hacking.logic.book} writes: ``Statisticians want numerical measures of the degree to which data support hypotheses.''  One such measure is a Bayesian posterior probability, but imprecise probabilists---especially those in the belief function community---are well aware that precise probability theory is not the only mode of uncertainty quantification.  Indeed, in a statistical inference problem, where prior information is at best incomplete and data speaks only indirectly through a model, there's good reason to question the appropriateness and/or reliability of a precise probability as statisticians' go-to quantitative expression of the degree to which data supports hypotheses.  

Balch et al.~\cite{balch.martin.ferson.2017} expressed this concern in terms of {\em false confidence}.  Roughly, in the context described in Section~\ref{SS:setup}, false confidence corresponds to the existence of false hypotheses to which, say, a default-prior Bayesian posterior distribution tends to assign high probability, support, or confidence.  Their result applies to (generalized) fiducial inference \cite{hannig.review, fisher1935a, dawid2020}, confidence distributions \cite{xie.singh.2012}, etc., so it highlights a risk of unreliability inherent in {\em all} precise-probabilistic approaches to statistical uncertainty quantification. Since reliability is obviously a top priority, there's an exciting opportunity for imprecise probability theory to make a fundamental contribution to statistics, a domain in which imprecise-probabilistic methods are greatly under-appreciated and largely unused.  Along these lines, I've recently shown \cite{martin.nonadditive, imchar, martin.partial, martin.partial2} that a suitable {\em possibilistic}, or {\em consonant belief} framework for statistical inference is immune to false confidence: it's reliable in the sense that it doesn't tend to assign high support to any false hypotheses!  

Unfortunately, the {\em false confidence theorem}, as stated in \cite{balch.martin.ferson.2017}, is only an existence result.  In a certain sense, the existence of hypotheses that are afflicted with false confidence is ``obvious,'' and it's partly for this reason that statisticians largely haven't taken this too seriously \cite{prsa.conf, prsa.response, williams.fct.2018}.  But the extent of false confidence affliction goes well beyond the hypotheses for which it's obvious: this has been demonstrated empirically in a number of specific examples, but no theoretical characterizations have been put forward.  In fact, to my knowledge, all that's known is that, for a relatively broad class of statistical models, certain hypotheses---basically, linear hypotheses---are safe from false confidence \cite{martin.isipta2023}.  The present paper partially answers the title question: {\em Which hypotheses are afflicted with false confidence?}  In particular, under a simple model that (approximately) represents most practical cases, I show that a class of (non-linear) hypotheses which includes those that are {\em co-convex}, i.e., complements of convex sets, are afflicted with false confidence.  This is not a complete characterization, but provides some insight as to what structure breeds false confidence. 

The present paper says very little about belief functions and imprecise probability, but I still expect this line of investigation to make a significant contribution.  Indeed, once the extent and implications of false confidence are understood, statisticians who care about reliable uncertainty quantification will have no choice but to use certain imprecise-probabilistic ideas and methods.

\section{Background}
\label{S:background}

\subsection{Problem setup}
\label{SS:setup}

Let $X$ denote the data taking values in a general sample space $\XX$.  A statistical model consists of a family of probability distributions $\{\prob_\theta: \theta \in \TT\}$ on $\XX$ indexed by a general parameter space $\TT$.  As is commonly assumed, suppose there is an uncertain ``true'' parameter value $\Theta$ such that $X$ has distribution $\prob_\Theta$.  I'll assume throughout that prior information about $\Theta$ is vacuous.  The high-level goal is to quantify uncertainty about $\Theta$, given $X=x$.

\subsection{Inferential models}

Following \cite{imchar}, an inferential model (IM) is a map from data $x \in \XX$ to a lower probability $\lPi_x$ supported on subsets of $\TT$, which depends implicitly on the statistical model and perhaps other things, e.g., prior information about $\Theta$.  The interpretation is that $\lPi_x(H)$ measures the degree of support for or belief/confidence in the truthfulness of the hypothesis ``$\Theta \in H$'' given data $X=x$.  Examples of precise IMs include Bayes's posterior probability and Fisher's fiducial distribution; examples of imprecise IMs include Dempster's seminal proposal \cite{dempster1966, dempster1968a, dempster2008}, Walley's generalized Bayes \cite{walley1991}, consonant likelihood-based belief functions \cite{shafer1982, wasserman1990b, denoeux2014}, and what's briefly described in Section~\ref{SS:poss.im} below.  

Bayesian- and fiducial-like frameworks quantify uncertainty about $\Theta$, given $X=x$, with a precise (countably additive) ``posterior distribution''
\begin{equation}
\label{eq:bayes}
\Pi_x(H) = \frac{\int_H L_x(\theta) \, \Pi(d\theta)}{\int_\TT L_x(\theta) \, \Pi(d\theta)}, \quad H \subseteq \TT, 
\end{equation}
where $\Pi$ is like a ``prior distribution'' for the uncertain parameter $\Theta$ and $\theta \mapsto L_x(\theta)$ is the model's likelihood function given $X=x$.  The quotation marks are highlight the point that, since prior information is assumed vacuous, these are ``prior'' and ``posterior'' distributions only in a formal sense.  As Jeffreys~\cite{jeffreys1946} explains, $\Pi$ is a default measure that gets updated to $\Pi_x$ by formally following Bayes's rule when $X=x$.  In certain contexts, e.g., \cite{hannig.review}, $\Pi$ itself might depend on data, hence can't represent genuine prior information.  In any case, the map $H \mapsto \Pi_x(H)$ is often used in applications to quantify uncertainty about $\Theta$, given $X=x$.  But ``[Bayes's rule] does not create real probabilities from hypothetical probabilities'' \cite{fraser.copss}, so a practically and theoretically important question is if this brand of probabilistic uncertainty quantification is reliable.

\subsection{False confidence}

The false confidence theorem \cite{balch.martin.ferson.2017, martin.nonadditive} says that, for {\em any} precise IM, $x \mapsto \Pi_x$, there exists a hypothesis--threshold pair $(H,\alpha)$ such that 
\[ \Theta \not\in H \quad \text{and} \quad \prob_\Theta\{ \Pi_X(H) \geq 1-\alpha \} > \alpha. \]
That is, there exists false hypotheses $H$ to which the posterior tends to assign a (relative to $\alpha$) large probability/confidence, which sheds light on a lurking unreliability.  Martin~\cite{imunder} shows that false confidence implies an incoherence-like risk of monetary loss to those who quantify uncertainty using precise IMs.  


\subsection{Consonant beliefs to the rescue}
\label{SS:poss.im}

Fisher \cite{fisher1930} writes: ``...[the likelihood function] does not obey the laws of probability; it involves no differential element.''  The default measure $\Pi$ also has no meaningful differential element ``$d\theta$''---with vacuous prior information, there's no reason to think that measure-theoretically larger hypotheses are ``more likely'' than smaller ones.  And if neither the likelihood nor the default prior have a meaningful differential element, then the differential element on the right-hand side of \eqref{eq:bayes} can't be meaningful either.  Indeed, it's easy to find very large hypotheses that are false, hence the trivial cases of false confidence.  More generally, the meaningless differential element is at the heart of false confidence \cite{martin.basu}. 

If there exists an IM that protects all hypotheses from false confidence, then it must be imprecise; the above remarks suggest that it should also be differential element free.  The simplest example is a consonant belief function \cite{shafer1976}, one whose conjugate plausibility function is a maxitive possibility measure \cite{hose2022thesis, dubois.prade.book}.  To my knowledge, the first IMs shown to satisfy 
\begin{equation}
\label{eq:valid}
\sup_{\Theta \not\in H} \prob_\Theta\{ \lPi_X(H) \geq 1-\alpha \} \leq \alpha, \quad \text{for all $(H,\alpha)$}, 
\end{equation}
were those put forward in \cite{imbook} based on nested random sets; see, also, \cite{denoeux.li.2018, balch2012}.  The condition \eqref{eq:valid} implies, among other things, that there's no false confidence.  The valid IM construction has been generalized and streamlined in \cite{plausfn, gim, martin.partial2}, but the specific details won't be needed in what follows.

\section{Co-convexity breeds false confidence}

Without much loss of generality, I'll focus here on the $D$-dimensional Gaussian case $X \sim \nm_D(\Theta, \Sigma)$, where $\Theta \in \TT = \RR^D$ is the uncertain parameter and the $D \times D$ covariance matrix $\Sigma$ is fixed and known.  Then the likelihood is 
\[ L_X(\theta) \propto \exp\{-\tfrac12 (X - \theta)^\top \Sigma^{-1} (X - \theta)\}, \quad \theta \in \TT. \]
I say ``without much loss of generality'' because, in most of the statistical models used in practical applications, there's a corresponding Gaussian limit experiment; see, e.g., Chapter~9 in \cite{vaart1998}. That is, if the sample size is large, then the maximum likelihood estimator (say) is an approximately minimal sufficient statistic whose sampling distribution is approximately Gaussian with mean $\Theta$ and covariance matrix a multiple of the inverse Fisher information.  In this case, with vacuous prior information, the go-to precise IM for $\Theta$ is 
\begin{equation}
\label{eq:posterior}
\Pi_X = \nm_D(X, \Sigma). 
\end{equation}
The precise IM in \eqref{eq:posterior} has a number of desirable properties, e.g, highest posterior density credible sets are minimum volume confidence sets.  But it still suffers from the inherent unreliability exposed by the false confidence theorem.

To develop some intuition, consider a function $\phi: \TT \to \RR$, and define
\begin{equation}
\label{eq:H.phi}
H_\phi = \{ \theta \in \TT: \phi(\theta) > \phi(\Theta)\}. 
\end{equation}
Clearly, hypothesis $H_\phi$ is {\em false}, i.e., $H_\phi \not\ni \Theta$.  If $\phi$ is (non-linear) convex, which makes $H_\phi$ {\em co-convex}---the complement of a convex set---then Jensen's inequality gives $\E_\Theta\{ \phi(X) \} > \phi(\Theta)$.  So, there must be non-negligible probability that $X$, the $\Pi_X$-posterior mean, is contained in the false $H_\phi$; and, if the posterior mean is in $H_\phi$, then the corresponding posterior probability, $\Pi_X(H_\phi)$, really can't be small, hence false confidence in $H_\phi$.  Interestingly, this apparently has little to do with the size of $H_\phi$ or $H_\phi^c$: something else is driving false confidence.  

The more general, albeit less intuitive, result is presented next.  Define a set $G \subset \TT$ to be non-linear locally convex at $\vartheta$, or $\vartheta$-{\em noloco}, if $G$ contains $\vartheta$ on its boundary, if it has a supporting hyperplane at $\vartheta$, and if the intersection of $G^c$ with the linear space that contains $G$ determined by the supporting hyperplane has non-zero Lebesgue measure; see Figure~\ref{fig:noloco}.  For example, if $\phi$ is a non-linear convex function, then the complement of $H_\phi$ in \eqref{eq:H.phi} is $\Theta$-noloco.  

\begin{figure}[t]
\begin{center}
\begin{tikzpicture}
\draw plot [smooth cycle] coordinates {(0,0) (1,1) (3,1) (1.5,0) (2,-1)};
\filldraw[black] (0.25,-0.23) circle (1pt) node[anchor=north]{$\vartheta$};
\draw[gray, very thin] (-1.2, 0.66) -- (2, -1.33);
\filldraw[black] (1,0) circle (0pt) node[anchor=south]{$G$};
\end{tikzpicture}
\end{center}
\caption{A non-convex $G$ that's $\vartheta$-noloco; gray line defines the supporting hyperplane.}
\label{fig:noloco}
\end{figure}
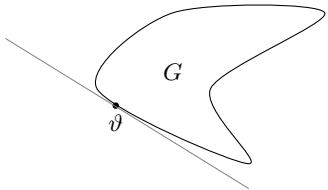

\begin{proposition}
\label{prop:fc}
For any $\Theta \in \TT$, if $G$ is $\Theta$-noloco, then the hypothesis $H=G^c$ is afflicted by false confidence.  In particular, the random variable $\Pi_X(H)$, as a function of $X \sim \nm_D(\Theta, \Sigma)$, is stochastically larger than $\unif(0,1)$. 
\end{proposition}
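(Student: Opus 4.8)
The plan is to exploit the supporting hyperplane provided by the $\Theta$-noloco condition to bound $\Pi_X(H)$ below by a one-sided posterior probability that, under $\prob_\Theta$, is exactly $\unif(0,1)$, and then to turn the positive-measure clause of noloco into the strict inequality that genuine false confidence requires.

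First I would fix a supporting hyperplane of $G$ at $\Theta$: a nonzero $a \in \RR^D$ with (after possibly replacing $a$ by $-a$) $a^\top\theta \le a^\top\Theta =: b$ for all $\theta \in G$. Then $G \subseteq \{\theta : a^\top\theta \le b\}$, so $H = G^c \supseteq \{\theta : a^\top\theta > b\}$, and monotonicity of the probability measure $\Pi_X = \nm_D(X,\Sigma)$ from \eqref{eq:posterior} gives $\Pi_X(H) \ge \Pi_X(\{\theta : a^\top\theta > b\})$ for every $x$. Since $a^\top\theta$ has law $\nm(a^\top X, a^\top\Sigma a)$ under $\Pi_X$, writing $\Phi$ for the standard normal CDF,
\[
\Pi_X(\{\theta : a^\top\theta > b\}) = \Phi\!\left( \frac{a^\top X - b}{\sqrt{a^\top\Sigma a}} \right) =: U_X.
\]
Under $X \sim \nm_D(\Theta,\Sigma)$ one has $a^\top X \sim \nm(b, a^\top\Sigma a)$ because $a^\top\Theta = b$, so $U_X \sim \unif(0,1)$ by the probability integral transform. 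Thus $\Pi_X(H) \ge U_X$ surely with $U_X$ uniform, which already makes $\Pi_X(H)$ stochastically at least as large as $\unif(0,1)$; and $\Theta \in \partial G$ gives $\Theta \notin H$, so $H$ is a false hypothesis.

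Next I would quantify the slack. Because $\{\theta : a^\top\theta > b\} \subseteq H$, additivity gives $\Pi_X(H) - U_X = \Pi_X\big( G^c \cap \{\theta : a^\top\theta \le b\} \big)$. The $\Theta$-noloco hypothesis says precisely that the part of the supporting half-space left uncovered by $G$, namely $G^c \cap \{\theta : a^\top\theta \le b\}$, has positive Lebesgue measure; since the Gaussian density of $\Pi_X = \nm_D(X,\Sigma)$ is strictly positive on all of $\RR^D$, this forces $\Pi_X\big( G^c \cap \{\theta : a^\top\theta \le b\} \big) > 0$, i.e.\ $\Pi_X(H) > U_X$, for every $x$. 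Picking a rational $q = 1 - \alpha \in (0,1)$ with $\prob_\Theta\{ U_X < q < \Pi_X(H) \} > 0$ (some such $q$ exists since $\Pi_X(H) > U_X$ surely and $U_X \in (0,1)$), and noting $\{U_X > q\}$ and $\{U_X < q < \Pi_X(H)\}$ are disjoint subsets of $\{\Pi_X(H) > q\}$, we get $\prob_\Theta\{ \Pi_X(H) \ge 1-\alpha \} \ge \prob_\Theta\{ U_X > q \} + \prob_\Theta\{ U_X < q < \Pi_X(H) \} > 1 - q = \alpha$, which is false confidence; equivalently, $\Pi_X(H) > U_X$ surely says $\Pi_X(H)$ is (strictly) stochastically larger than $\unif(0,1)$.

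The Gaussian linear-image identity and the probability integral transform are routine; the step that needs care is the last one---interpreting the positive-Lebesgue-measure clause of noloco correctly (it concerns the ``dent'' of the supporting half-space not covered by $G$, which is exactly where non-convexity of $G$ enters) and being precise about how surely dominating a $\unif(0,1)$ variable yields the strict inequality in the definition of false confidence rather than only the weak stochastic ordering. A minor point to note in passing is that $\Sigma$ should be positive definite, so that $a^\top\Sigma a > 0$ and the standardizations above make sense.
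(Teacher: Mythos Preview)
Your proof is correct and follows essentially the same route as the paper's: use the supporting hyperplane to sandwich a linear half-space $H_\text{lin}$ inside $H$, observe that $\Pi_X(H_\text{lin})$ is exactly $\unif(0,1)$ under $\prob_\Theta$, and invoke the positive-Lebesgue-measure clause of noloco together with absolute continuity of the Gaussian to get the strict inequality $\Pi_X(H)>\Pi_X(H_\text{lin})$. Your treatment is in fact a bit more careful than the paper's on the last step---you spell out, via the rational-$q$ pigeonhole argument, how the surely-strict pointwise bound translates into a strict distributional inequality $\prob_\Theta\{\Pi_X(H)\ge 1-\alpha\}>\alpha$, which the paper leaves implicit.
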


\begin{proof}
Let $g_\Theta$ denote the vector that defines the supporting hyperplane of $G$ at $\Theta$.  Since $G$ is contained in the half-space $\{\theta \in \TT: g_\Theta^\top(\theta-\Theta) \leq 0\}$, we get  
\[ H \supset H_\text{lin} := \{\theta \in \TT: g_\Theta^\top(\theta - \Theta) > 0\}, \]
and, consequently, $\Pi_X(H) > \Pi_X(H_\text{lin})$.  The last inequality is strict because $\Pi_X$ is absolutely continuous with respect to Lebesgue measure and, by assumption, $H \setminus H_\text{lin}$ has positive Lebesgue measure.  The lower bound, $\Pi_X(H_\text{lin})$, satisfies
\begin{equation}
\label{eq:post.lin}
\Pi_X(H_\text{lin}) = 1 - F\Bigl( -\frac{g_\Theta^\top (X - \Theta)}{\{g_\Theta^\top \Sigma \, g_\Theta\}^{1/2}} \Bigr), 
\end{equation}
where $F$ is the standard normal distribution function.  As a function of $X \sim \nm_D(\Theta, \Sigma)$, the right-hand side of \eqref{eq:post.lin} is $\unif(0,1)$.  Therefore, $\Pi_X(H)$ is (strictly) lower-bounded by a $\unif(0,1)$ random variable, completing the proof. 
\end{proof} 

By no means is this a complete characterization of false confidence.  For one thing, it's absolutely not necessary for $\Theta$ to sit on the boundary of the hypothesis---I imposed this constraint just to make the analysis tractable.  Similar results are expected for hypotheses that miss $\Theta$ but not by too much.  More generally, I don't believe that noloco is fundamental to false confidence.  My conjecture is that all {\em non-linear} hypotheses about $\Theta$ have at least a mild case of false confidence---the reason being that non-linear mapping can warp the parameter space in such a way that probability assignments get pushed in one direction or another systematically.  Precisely and rigorously diagnosing the existence and severity of affliction remains an open question.

\section{Illustrations}

\begin{example}
\label{ex:length}
{\em Non-linear hypotheses.} Inference on the squared length of a normal mean vector is a classically challenging statistical problem \cite{stein1959}.  In the present context, if $\phi(\theta) = \|\theta\|^2$, then $H_\phi$ as in \eqref{eq:H.phi} is a (false) hypothesis about the mean vector's squared length; it's also co-convex and, by Proposition~\ref{prop:fc}, is afflicted with false confidence.  To see the extent of affliction, the {\sc cdf} $\pi \mapsto \prob_\Theta\{ \Pi_X(H_\phi) \leq \pi \}$ is shown in Figure~\ref{fig:two.examples}(a), where the dimension is $D=2$ and $\Theta$ is length 1.  Note that, in this case, $\Pi_X(H_\phi)$ is always greater than 0.6, even though $H_\phi$ is false.  For comparison, Figure~\ref{fig:two.examples}(a) also displays the {\sc cdf} of the valid IM's lower probability $\lPi_X(H_\phi)$ and it's clear that there's no false confidence. 
\end{example}

\begin{figure}[t]
\begin{center}
\subfigure[Example~\ref{ex:length}]{\scalebox{0.45}{\includegraphics{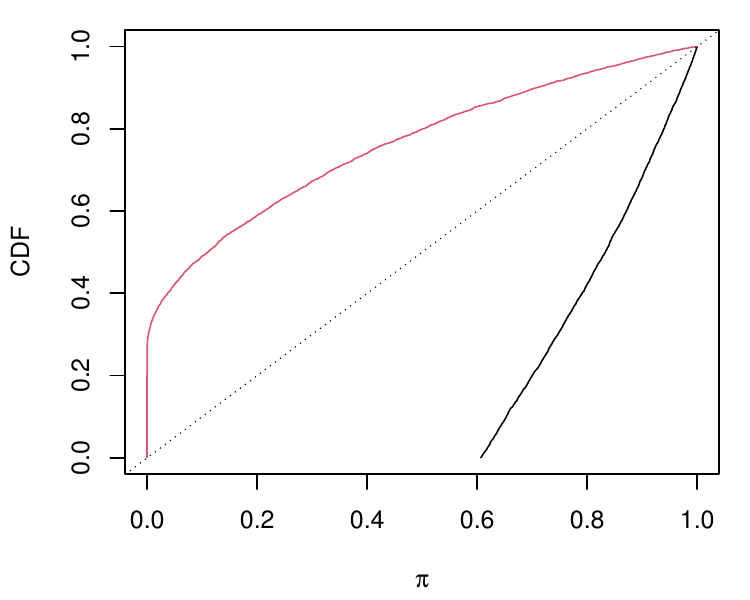}}}
\subfigure[Example~\ref{ex:bounded}]{\scalebox{0.45}{\includegraphics{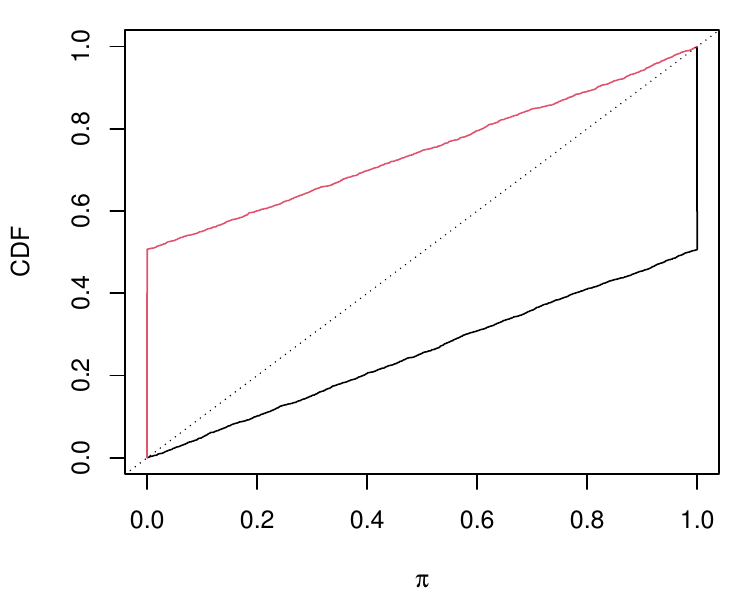}}}
\end{center}
\caption{Black lines are {\sc cdf}s for the (flat-prior) Bayes posterior probabilities and red lines are the {\sc cdf}s for the corresponding valid IM's lower probabilities.}
\label{fig:two.examples}
\end{figure}

\begin{example}
\label{ex:bounded}
{\em Non-linear parameter space.} Fraser~\cite{fraser2011} considers a normal mean model $X \sim \nm(\Theta, 1)$ but with the side information that $\Theta$ has a {\em known} lower bound, which I take to be 0 without loss of generality.  
Consider the (false) hypothesis $H = (\Theta, \infty)$---but note that the parameter constraint makes $H^c$ bounded.  The {\sc cdfs} of the (flat-prior) Bayesian posterior probability, $\Pi_X(H)$, and of the valid IM's lower probability, $\lPi_X(H)$, are shown in Figure~\ref{fig:two.examples}(b).  Note that the Bayes posterior assigns probability 1 to the false hypothesis $H$ 50\% of the time, while the valid IM does the polar opposite, rightfully assigning 0 (or small) support to the false hypothesis most of the time. 
\end{example}

\section{Conclusion}
\label{S:discuss}

The present paper is concerned with the following question: {\em which statistical hypotheses are afflicted by false confidence?}  My collaborators and I have had intuition about how to answer this question for some time, but only now have I been able to formulate this intuition in a way that's conducive to mathematical analysis.  The result that I proved here is quite simple, perhaps unremarkable, but I'd argue that simplicity is a virtue.  After all, false confidence is the rule, rather than the exception, so it should be easy to identify hypotheses that are afflicted.  What's interesting is that a property slightly more general than co-convexity is what makes the hypothesis vulnerable to false confidence.  

The result presented here provides a sufficient condition for false confidence, but I seriously doubt that the same condition is necessary.  As above, my conjecture is that non-linearity is enough to create at least a susceptibility to false confidence.  Non-linearlity alone may not be severe enough to cause false confidence-level problems as defined here; but maybe to cause the milder but still concerning ``fluke confidence'' that my friend and collaborator, Michael Balch, has been telling me about recently.  In any case, the advancements made in the present paper make me optimistic that we'll soon be able to settle these questions.

%
%
%

\bibliographystyle{apalike}

\bibliography{/Users/rgmarti3/Dropbox/Research/mybib}

\end{document}